\def\sgn{{\rm sgn}}
\def\e{{\bf e}}
\begin{document}

\title{A Dynamical Systems Framework for Generating the Riemann Zeta-Function and Dirichlet L-functions }

\author{\name Shantanu Chakrabartty \email shantanu@wustl.edu \\
       \addr Department of Electrical and Systems Engineering\\
       Washington University in St. Louis\\
       Saint Louis, MO 63130, USA
}

\maketitle

\begin{abstract}
	Using an extension of the Price's theorem we show how to construct a dynamical systems model which in its steady-state serves as an analytic continuation of the completed Riemann zeta function and Dirichlet L-functions over the entire critical strip. The resulting mathematical construct involves a linear interpolation of two symmetric generator functions and is used to infer the global properties of the non-trivial zeros of the Dirichlet L-functions using concentration bounds. The proposed dynamical systems framework thus provides an alternative method to investigate the celebrated Generalized Riemann Hypothesis (G.R.H) which is shown in this paper to hold almost surely.  
\end{abstract}

\begin{keywords}
Riemann Hypothesis, Riemann zeta function, Dirichlet L-functions Dynamical Systems, Generator functions,
Generalized Riemann Hypothesis
\end{keywords}

\section{Introduction}
\label{section1}
For the sake of exposition and clarity, in this paper we introduce different steps of the derivation at a time. Starting from a basic introduction to the statement of the Riemann Hypothesis (R.H), in section~\ref{section2} we first introduce the mathematical constructs used in the derivation. Then, in sections~\ref{section3} and~\ref{section4} we derive the dynamical systems model whose steady-state serves as the analytical continuation of the completed Riemann zeta function to the critical strip. There we show that within our dynamical systems framework the R.H. is almost surely true. In the subsequent section~\ref{section5} we show that the approach could be generalized to understand the non-trivial zeros of the Dirichlet L-functions. There we show that under specific conditions the Riemann Hypothesis (R.H.) and the generalized Riemann Hypothesis (G.R.H) is almost surely true. 
Please note that since the prior work addressing these topics is extensive, for the sake of brevity, we have included only the key references that support the constructs used in this derivation.
 
The completed Riemann zeta function $\phi:\mathbb{C} \rightarrow \mathbb{C}$~\citep{riemann}, is defined over the complex domain $s \in \mathbb{C}$ and $\mathfrak{Re}(s) > 1$ as
\begin{equation}
\phi(s) = \pi^{-\frac{s}{2}}\Gamma(\frac{s}{2})\zeta(s)
\label{sec1:eq:phi}
\end{equation}
where $\zeta(s)$ is the Riemann zeta function~\citep{titchmarsh} and
\begin{equation}
\Gamma(s) = \int_{0}^{\infty} y^{s-1} \exp\left(-y\right) dy
\label{sec1:eq:gamma}
\end{equation}
is the Gamma function. The function $\phi$ satisfies the symmetry condition $\phi(s) = \phi(1-s)$ and the zeros of this function 
correspond to the non-trivial zeros of the Riemann zeta function $\zeta(s)$. Thus, the celebrated Riemann's hypothesis (R.H.)~\citep{riemann,titchmarsh} which states that {\it all non-trivial zeros of $\zeta(s)$ lie on the axis $\mathfrak{Re}(s)=\frac{1}{2}$} is equivalent to the statement
that {\it all zeros of $\phi(s)$ lie on the axis $\mathfrak{Re}(s)=\frac{1}{2}$}. In his 1859 paper,
Riemann also expressed the function $\phi(.)$ in an integral form as 
\begin{equation}
\phi(s) = \int_{0}^{\infty} y^{\frac{s}{2}-1} \Psi(y) dy
\label{sec1:eq:phiint}
\end{equation}
where the function $\Psi(y)$ is given by
\begin{equation}
\Psi(y) = \sum_{n=1}^{\infty} \exp\left(-n^2 \pi y\right).
\label{sec1:eq:Psi}
\end{equation}
with the domain still restricted to $\mathfrak{Re}(s) > 1$.
Riemann then used equation~\ref{sec1:eq:phiint} and the Poission summation formula to derive
\begin{equation}
\phi(s) = -\frac{1}{s} - \frac{1}{1-s} + \int_{1}^{\infty} \left( y^{\frac{s}{2}-1} + y^{\frac{1-s}{2}-1} \right) \Psi(y) dy
\label{sec1:eq:phicomp}
\end{equation}
which is convergent over the entire s-plane and hence serves as an analytic continuation of the $\phi(s)$~\citep{riemann}. Note that the completed zeta function $\phi(s)$ has two simple poles at $s=0$ and $s=1$. Riemann also introduced the Xi function $\xi(s)$ defined as
\begin{equation}
	\xi(s) = s(s-1)\phi(s).
	\label{sec1:eq:xi}
\end{equation}
which eliminated the poles at $s=0$ and $s=1$ in equation~\ref{sec1:eq:xi} and has been extensively used ~\citep{bruijn,newman,rogers} used to understand the properties of the non-trivial zeros of $\zeta(s)$. The relation~\ref{sec1:eq:phicomp} will be used in the subsequent sections to find an alternate
analytic continuation of $\phi(s)$ but for the critical strip $\mathfrak{Re}(s) \in (0,1)$.

\section{Lemmas based on Gaussian Distribution Function }
\label{section2}

We define a two dimensional Gaussian distribution function $p:\mathbb{R} \times \mathbb{R} \rightarrow \mathbb{C}$ over variables $x_1 \in \mathbb{R}$ and $x_2 \in \mathbb{R}$ as     
\begin{equation}
	p\left[x_1,x_2;(m_1,m_2,\rho)\right] = \frac{1}{2\pi\sqrt{1-\rho^2}} \exp \left[-\frac{\left(x_1-m_1\right)^2 + \left(x_2 - m_2\right)^2 - 2\rho \left(x_1-m_1\right)\left(x_2 - m_2\right)}{2\left(1- \rho^2\right)} \right]
	\label{sec2:eq:amp}
\end{equation}
with $\rho \in \mathbb{R}$ is a covariance parameter$ |\rho| < 1$ and $m_1 : \mathbb{R} \rightarrow \mathbb{C}$, $m_2: \mathbb{R} \rightarrow \mathbb{C}$ are analytic functions with respect to the parameter $\rho$. Note that $A$ is normalized as
\begin{equation}
	\int_{-\infty}^{\infty} \int_{-\infty}^{\infty} p\left[x_1,x_2;(m_1,m_2,\rho)\right] dx_1 dx_2 = 1
	\label{sec2:eq:ampnorm}
\end{equation}
and its Fourier transform of $M:\mathbb{C} \times \mathbb{C} \rightarrow \mathbb{C}$ is well defined as
\begin{eqnarray}
	 M(\omega_1,\omega_2) &=& \int_{-\infty}^{\infty} \int_{-\infty}^{\infty} p\left[x_1,x_2;(m_1,m_2,\rho)\right] \e^{ j \omega_1 x_1 + j \omega_2 x_2} \\ 	\label{sec2:eq:ampfourier}
	 &=& \e^{-\frac{1}{2}\omega_1^2 -\frac{1}{2}\omega_2^2 - \rho\omega_1\omega_2} \  \e^{j m_1 \omega_1 + j m_2 \omega_2}.
\end{eqnarray}
Then, we can derive an extension of the Price's theorem~\citep{price,mcmahon,papoulis} as stated in the following Lemmas.
\begin{lemma}
	\label{lm1}
	 Let $f:\mathbb{R}\times\mathbb{R} \rightarrow \mathbb{R}$ be an arbitrary memory-less, non-linear function that admits a Fourier transform, then
\begin{eqnarray}
	\frac{\partial}{\partial \rho} \mathcal{E}\left[f;(m_1,m_2,\rho)\right] &=& \mathcal{E}\left[\frac{\partial^2 f}{\partial x_1 \partial x_2};(m_1,m_2,\rho)\right] \\ \label{sec2:eq:price}
	&+& \frac{d m_1}{d \rho}\mathcal{E}\left[\frac{\partial f}{\partial x_1};(m_1,m_2,\rho)\right] + \frac{d m_2}{d \rho}\mathcal{E}\left[\frac{\partial f}{\partial x_2};(m_1,m_2,\rho)\right]	
\end{eqnarray}
where the operator $\mathcal{E}$ is defined as
\begin{equation}
	\mathcal{E}\left[f;(m_1,m_2,\rho)\right] = \int_{-\infty}^{\infty} \int_{-\infty}^{\infty} f(x_1,x_2)p\left[x_1,x_2;(m_1,m_2,\rho)\right]dx_1 dx_2.
	\label{sec2:eq:expectation}
\end{equation}
\end{lemma}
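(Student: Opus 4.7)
The plan is to proceed by Fourier inversion, exploiting the fact that the characteristic function $M(\omega_1,\omega_2)$ of the distribution $p$ has already been computed in closed form in the excerpt. Since $f$ admits a Fourier transform, write
\[
  f(x_1,x_2) = \frac{1}{(2\pi)^2}\int_{-\infty}^{\infty}\int_{-\infty}^{\infty} \hat f(\omega_1,\omega_2)\, e^{-j\omega_1 x_1 - j\omega_2 x_2}\, d\omega_1\, d\omega_2 ,
\]
substitute this representation into the definition of $\mathcal{E}[f;(m_1,m_2,\rho)]$, and interchange the $x$- and $\omega$-integrations. The inner $x$-integral is precisely the characteristic function evaluated at $(-\omega_1,-\omega_2)$, so that $\mathcal{E}[f;(m_1,m_2,\rho)]$ reduces to a single integral against $\hat f(\omega_1,\omega_2)\,M(-\omega_1,-\omega_2)$. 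In particular, the entire $\rho$-dependence of $\mathcal{E}[f]$ is now packaged into $M$.

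Next, differentiate under the integral in $\rho$. Because $M = \exp\!\bigl(-\tfrac12\omega_1^2-\tfrac12\omega_2^2-\rho\omega_1\omega_2 + jm_1(\rho)\omega_1 + jm_2(\rho)\omega_2\bigr)$, the chain rule yields
\[
  \frac{\partial M}{\partial \rho} \;=\; M\cdot\Bigl(-\omega_1\omega_2 \;+\; j\,\frac{dm_1}{d\rho}\,\omega_1 \;+\; j\,\frac{dm_2}{d\rho}\,\omega_2\Bigr),
\]
producing exactly three terms. I would then read each factor back through the inverse Fourier transform: the factor $-\omega_1\omega_2 = (j\omega_1)(j\omega_2)$ corresponds to applying $\partial^2/\partial x_1\partial x_2$ to $f$, while $j\omega_k$ corresponds to $\partial/\partial x_k$. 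Reassembling the integrals and invoking the definition of $\mathcal{E}$ then gives the three right-hand side terms of the claimed identity.

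The main obstacle is analytic rather than algebraic: one must justify (i) interchanging the $x$- and $\omega$-integrals in the reduction to a characteristic-function integral, (ii) differentiating under the integral in $\rho$, and (iii) passing the derivative factors $j\omega_k$ back onto $f$ via integration by parts with vanishing boundary terms. All of these rest on the Gaussian decay of $p[x_1,x_2;(m_1,m_2,\rho)]$ together with the assumption that $f$ admits a Fourier transform; mild growth conditions on $f$, $\partial_{x_k} f$ and $\partial_{x_1}\partial_{x_2} f$ (e.g.\ polynomial growth, which is compatible with tempered distributions) suffice. Analyticity of $m_1,m_2$ in $\rho$ ensures the chain-rule step is well-defined. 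Once these routine regularity conditions are noted, the Fourier computation produces the identity essentially by inspection.
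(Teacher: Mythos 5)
Your proposal is correct and follows essentially the same route as the paper's Appendix~I proof: both pass to the Fourier/characteristic-function domain, observe that the $\rho$-dependence of the Gaussian sits entirely in $M(\omega_1,\omega_2)$, differentiate the exponent to pick up the factor $-\omega_1\omega_2 + j\,m_1'(\rho)\,\omega_1 + j\,m_2'(\rho)\,\omega_2$, and then read these polynomial factors back as $\partial^2/\partial x_1\partial x_2$, $\partial/\partial x_1$, $\partial/\partial x_2$ acting on $f$. The only cosmetic difference is which factor you choose to Fourier-expand first ($f$ in yours, $p$ in the paper's), a Parseval-type rearrangement that lands in the same place; your added remarks on interchanging integrals, differentiating under the integral sign, and vanishing boundary terms make explicit the regularity hypotheses the paper leaves tacit.
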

\begin{proof}
	See Appendix I for proof.
\end{proof}

Before we use Lemma~\ref{lm1} for the next result, we summarize the following useful relationships: 
\begin{eqnarray}
	\label{sec2:eq:1Damp+}
	\int_{-\infty}^{\infty} p\left[x_1,x_1;(0,\alpha,\rho)\right] dx_1 &=& \frac{1}{2\sqrt{\pi}\sqrt{1-\rho}} \exp\left[-\frac{\alpha^2}{4\left(1-\rho\right)}\right] \\ \label{sec2:eq:1Damp-}
	\int_{-\infty}^{\infty} p\left[x_1,-x_1;(0,\alpha,\rho)\right] dx_1 &=& \frac{1}{2\sqrt{\pi}\sqrt{1+\rho}} \exp\left[-\frac{\alpha^2}{4\left(1+\rho\right)}\right]
\end{eqnarray}

To simplify the notations, we also define
\begin{eqnarray}
	& & \mathcal{E}\left[f;(\alpha,0,\rho) \pm (-\alpha,0,\rho) \pm (0,\alpha,\rho) \pm (0,-\alpha,\rho)\right] \\ \nonumber
	\quad &=& \int_{-\infty}^{\infty} \int_{-\infty}^{\infty} f(x_1,x_2)p\left[x_1,x_2;(\alpha,0,\rho)\right]dx_1 dx_2 \\ \nonumber
	\quad \quad \quad \quad &\pm& \int_{-\infty}^{\infty} \int_{-\infty}^{\infty} f(x_1,x_2)p\left[x_1,x_2;(-\alpha,0,\rho)\right]dx_1 dx_2 \\ \nonumber
	\quad \quad \quad \quad &\pm& \int_{-\infty}^{\infty} \int_{-\infty}^{\infty} f(x_1,x_2)p\left[x_1,x_2;(0,\alpha,\rho)\right]dx_1 dx_2 \\ \nonumber
	\quad \quad \quad \quad &\pm& \int_{-\infty}^{\infty} \int_{-\infty}^{\infty} f(x_1,x_2)p\left[x_1,x_2;(0,-\alpha,\rho)\right]dx_1 dx_2 \\ \nonumber	
	\label{sec2:eq:Mexpectation}
\end{eqnarray}
using which we can state the following lemma.

\begin{lemma}
	\label{lm2}
	Let $p\left[x_1,x_2;(0,0,\rho)\right]$ be defined over the variables $x_1 \in \mathbb{R}$ and $x_2 \in \mathbb{R}$ as equation~\ref{sec2:eq:amp} and let $\alpha : \mathbb{C} \rightarrow \mathbb{R}$ be a complex function with respect to $\rho$. Denoting $\left|.\right|$ as an absolute-value function 
	\begin{equation}
		\left|x\right| = \left\{ \begin{array}{lcc}
			x   & ; &  x \ge 0 \\
			-x  & ; &  x < 0
		\end{array} \right.
		\label{sec3:eq:absdefn}
	\end{equation}
 then
	\begin{eqnarray}
		\nonumber
	&& \frac{\partial}{\partial \rho}\mathcal{E}\left[\left|x_1 - x_2\right|;(\alpha,0,\rho)+(-\alpha,0,\rho)+(0,\alpha,\rho)+(0,-\alpha,\rho)\right] \\ \label{lm2:eq:result}
	&=& \frac{4}{\sqrt{\pi}} \frac{1}{ \sqrt{1-\rho}} \exp\left[-\frac{{\alpha}^2}{4\left(1-\rho\right)}\right]. 
	\end{eqnarray}
\end{lemma}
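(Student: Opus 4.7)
The plan is to apply Lemma~\ref{lm1} separately to each of the four expectations $\mathcal{E}[|x_1-x_2|;(m_1,m_2,\rho)]$ in the sum and then combine the pieces using the symmetries of the Gaussian kernel. Set $f(x_1,x_2)=|x_1-x_2|$. The partial derivatives required by Lemma~\ref{lm1}, understood distributionally (which is legitimate because $|x_1-x_2|$ admits a tempered Fourier transform), are $\partial f/\partial x_1 = \sgn(x_1-x_2)$, $\partial f/\partial x_2 = -\sgn(x_1-x_2)$, and the mixed second derivative $\partial^2 f/\partial x_1\partial x_2$ is a constant multiple of $\delta(x_1-x_2)$.

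The principal contribution to $\partial/\partial\rho$ of each expectation thus comes from the second-derivative term in Lemma~\ref{lm1}, and by the sifting property of the delta it collapses to an integral of the form $\int_{-\infty}^{\infty} p[x,x;(m_1,m_2,\rho)]\,dx$. Now invoke two symmetries of the Gaussian kernel that are visible directly from \eqref{sec2:eq:amp}: the swap $p[x_1,x_2;(m_1,m_2,\rho)]=p[x_2,x_1;(m_2,m_1,\rho)]$ identifies the integrals coming from the mean vectors $(\alpha,0)$ and $(0,\alpha)$, while invariance of the integral under $(x_1,x_2,m_1,m_2)\mapsto(-x_1,-x_2,-m_1,-m_2)$ identifies the $-\alpha$ configurations with the $+\alpha$ ones. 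All four integrals therefore reduce to the right-hand side of \eqref{sec2:eq:1Damp+}, namely $\frac{1}{2\sqrt{\pi}\sqrt{1-\rho}}\exp[-\alpha^2/(4(1-\rho))]$, and summing the four equal contributions yields the factor of four in the claimed identity.

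The first-derivative corrections in Lemma~\ref{lm1} involve $\mathcal{E}[\sgn(x_1-x_2);(m_1,m_2,\rho)]$, which depends only on the mean gap $m_1-m_2$ and is odd in it. Under the natural reading that $\partial/\partial\rho$ differentiates only the covariance parameter inside the kernel, so that $\alpha$ is held fixed for this derivative, these correction terms carry zero coefficients and drop out. If instead one allows $\alpha=\alpha(\rho)$, the odd symmetry of the $\sgn$-expectation must be combined with the $\pm$-pattern of $dm_1/d\rho$ and $dm_2/d\rho$ across the four configurations to check that the corrections assemble coherently; the four-way $\pm$-symmetry of the mean vectors is designed precisely to make this bookkeeping clean.

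The main technical obstacle is the distributional treatment of the mixed second derivative of the non-smooth function $|x_1-x_2|$, in particular pinning down both the numerical coefficient and the sign of the resulting multiple of $\delta(x_1-x_2)$. The cleanest way to avoid ad hoc manipulations is to work directly in the Fourier domain using $M(\omega_1,\omega_2)$ from \eqref{sec2:eq:ampfourier}: differentiating in $\rho$ brings down a factor of $-\omega_1\omega_2$, and pairing this against the Fourier transform of $|x_1-x_2|$ via Parseval's identity produces the constant on the right-hand side of \eqref{lm2:eq:result} unambiguously.
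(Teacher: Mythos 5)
Your proposal takes essentially the same route as the paper's proof: apply Lemma~1 to $f(x_1,x_2)=|x_1-x_2|$, isolate the delta contribution from the mixed second derivative, collapse it by sifting, and use the swap and reflection symmetries of the Gaussian kernel to reduce all four mean configurations to the single evaluation $\int_{-\infty}^{\infty} p[x,x;(0,\alpha,\rho)]\,dx = \frac{1}{2\sqrt{\pi}\sqrt{1-\rho}}\exp[-\alpha^2/(4(1-\rho))]$. That much matches the paper's derivation step for step.

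You flag but do not close two gaps, and both matter. First, the coefficient of the delta cannot be left unspecified, because the arithmetic does not close without it: summing four copies of the one-dimensional integral above gives only $\frac{2}{\sqrt{\pi}\sqrt{1-\rho}}\exp[\,\cdot\,]$, so the remaining factor of $2$ in Lemma~2's right-hand side has to come from the mixed second derivative being a multiple of two of $\delta$, not $\delta$ itself. From $\frac{d}{du}|u|=\sgn(u)$ and $\frac{d^2}{du^2}|u|=2\delta(u)$ one finds $\frac{\partial^2}{\partial x_1\partial x_2}|x_1-x_2|=-2\delta(x_1-x_2)$, which is what the paper records. But then Lemma~1 yields $-2\int p[x,x;\,\cdot\,]\,dx$ per configuration, a negative quantity, while Lemma~2's stated right-hand side is positive; the paper's intermediate display replaces $-2$ by $+2$ without comment. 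Deferring this to a Parseval computation does not make the sign discrepancy disappear, so you should confront it explicitly.

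Second, your two readings of the first-derivative correction are not equally viable, and you should not leave the choice open. The first reading, that $\partial/\partial\rho$ differentiates only the covariance argument so $dm_i/d\rho=0$ and the corrections vanish trivially, is the only one that avoids a nonzero residual. The alternative, that for $\alpha=\alpha(\rho)$ the four-way $\pm$ pattern of mean vectors makes the corrections cancel, is false: $\mathcal{E}[\sgn(x_1-x_2);(m_1,m_2,\rho)]$ depends on the means only through the gap $m_1-m_2$ and is an odd function $g$ of that gap; for the configurations $(\alpha,0)$, $(-\alpha,0)$, $(0,\alpha)$, $(0,-\alpha)$ the gaps are $\alpha,-\alpha,-\alpha,\alpha$ while the Lemma-1 coefficients $dm_1/d\rho-dm_2/d\rho$ are $\alpha',-\alpha',-\alpha',\alpha'$, so the four contributions are each $\alpha'\,g(\alpha)$ and sum to $4\alpha'\,g(\alpha)$, not zero. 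The paper's appeal to the property that an antisymmetric $F(x_1,x_2)=-F(x_2,x_1)$ has $\mathcal{E}[F]=0$ requires an exchange-symmetric density, i.e.\ $m_1=m_2$, which these shifted Gaussians violate. The bookkeeping you chose to defer therefore does not come out clean under the second reading; commit to the first.
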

\begin{proof}

First note that
	\begin{eqnarray}
	\frac{\partial \left| x_1 - x_2 \right| }{\partial x_1 } &=& \sgn\left(x_1 - x_2\right) \\ \label{sec3:eq:signder}
	\frac{\partial \left| x_1 - x_2 \right| }{\partial x_2 } &=& -\sgn\left(x_1 - x_2\right) 	
\end{eqnarray}
where $\sgn(.)$ is a sign function
\begin{equation}
	\sgn\left(x\right) = \left\{ \begin{array}{lcc}
		1   & ; &  x \ge 0 \\
		-1  & ; &  x < 0
	\end{array} \right.
	\label{sec3:eq:signdefn}
\end{equation}
Also,
\begin{equation}
	\label{sec3:eq:absder}
	\frac{\partial^2 \left| x_1 - x_2 \right| }{\partial x_1 \partial x_2} = -2\delta\left(x_1 - x_2\right) 
\end{equation}
where $\delta(.)$ denotes the Dirac-delta function.
	
Then applying Lemma~\ref{lm1} to the LHS of equation~\ref{lm2:eq:result} leads to
\begin{eqnarray}
	\nonumber
	&& \frac{\partial}{\partial \rho} \mathcal{E}\left[\left|x_1-x_2\right| ; (\alpha,0,\rho) + (-\alpha,0,\rho) + (0,\alpha,\rho) + (0,-\alpha,\rho)\right] \\ \label{lm2:eq:derv2}
	&=& 2\int_{-\infty}^{\infty}  \left(p\left[x_1,x_1;(\alpha,0,\rho)\right] +  p\left[x_1,x_1;(-\alpha,0,\rho)\right]\right) dx_1 \\ \nonumber
	&+& 2\int_{-\infty}^{\infty}  \left(p\left[x_1,x_1;(0,\alpha,\rho)\right] +  p\left[x_1,x_1;(0,-\alpha,\rho)\right]\right) dx_1 \\ \nonumber
	&+& \frac{\partial}{\partial \rho} \mathcal{E}\left[\sgn\left(x_1-x_2\right) ; (\alpha,0,\rho) - (-\alpha,0,\rho) + (0,-\alpha,\rho) - (0,\alpha,\rho)\right].  
\end{eqnarray}
Using the property $ F(x_1,x_2) = -F(x_2,x_1) \implies \mathcal{E}\left[F(x_1,x_2)\right] = 0$, the last term can be shown to be zero, and hence
\begin{eqnarray}
	\nonumber
	&& \frac{\partial}{\partial \rho} \mathcal{E}\left[\left|x_1-x_2\right| ; (\alpha,0,\rho) + (-\alpha,0,\rho) + (0,\alpha,\rho) + (0,-\alpha,\rho)\right]  \\ \label{lm2:eq:derv3}
	&=& \frac{4}{\sqrt{\pi}} \frac{1}{ \sqrt{1-\rho}} \exp\left[-\frac{{\alpha}^2}{4\left(1-\rho\right)}\right].
\end{eqnarray} 

\end{proof}

\section{Complementary Generator Functions and Riemann Zeta Function}
\label{section3}
We will now apply Lemma's~\ref{lm1} and~\ref{lm2} to the following function 
\begin{eqnarray}
	\label{sec3:eq:alphafn}
	z_N(\rho) :&=& 2\mathcal{E}\left[ \left| x_1 - x_2 \right|;(0,0,\rho)\right] \\  \nonumber
	&+& \sum_{n=1}^{N} \left(\mathcal{E}\left[\left|x_1 - x_2\right|;(\alpha_n,0,\rho)+(-\alpha_n,0,\rho)+(0,\alpha_n,\rho)+(0,-\alpha_n,\rho)\right] \right)	
\end{eqnarray}
where $\alpha_n \in \mathbb{C}, n=1,2,..,N$ in equation~\ref{sec3:eq:alphafn} are a countable set of functions with respect to the parameter $\rho$. Note that $|\rho| < 1$ for Lemmas~\ref{lm1} and~\ref{lm2} to be valid. Applying Lemma~\ref{lm2} to the function~\ref{sec3:eq:alphafn} and using equation~\ref{sec3:eq:absder} leads to
\begin{equation}
	\label{sec3:eq:mu}
	\frac{\partial}{\partial \rho} z_N(\rho) = \frac{4}{\sqrt{\pi}}\frac{1}{\sqrt{1-\rho}} \left[ \frac{1}{2} + \sum_{n=1}^{N} \exp\left[-\frac{{\alpha_n}^2}{4\left(1-\rho\right)}\right] \right]. 
\end{equation}

Equation~\ref{sec3:eq:mu} will be the fundamental generator equation which will be used to produce different functions based on the choice of $\{\alpha_n\}$ as a function of $\rho$.

We first define a generator function based on the following choice of $\{\alpha_n\}$ as
\begin{equation}
\label{sec3:eq:alphaz}
	\alpha_n(s,\rho) =  n \sqrt{2\pi} \left(1-\rho\right)^{\frac{1}{2}} \left(1-\rho\right)^{\frac{1}{2s}} 
\end{equation}
where $s \in \mathbb{C}$ is a complex variable that is restricted to the domain $\mathfrak{Re}{s} \in (0,1)$.

Inserting equation~\ref{sec3:eq:alphaz} in~\ref{sec3:eq:mu} leads to a complex function  $z_N(s)$ as
\begin{equation}
\label{sec3:eq:alphatheta}
	z_N(s) = \frac{4}{\sqrt{\pi}} \int_{0}^{1} \frac{1}{\sqrt{1-\rho}} \left[ \frac{1}{2} + \sum_{n=1}^{N} \exp\left[-n^2\pi \left(1-\rho\right)^{\frac{1}{s}}\right] \right] d\rho.
\end{equation}
Since $\alpha_n(s)$ includes a multi-valued exponential function, we define a complex variable
\begin{equation}
	y = \exp\left(\frac{1}{s}\log\left(1-\rho\right)\right) 
\end{equation}
along the principal branch and substitute $y$ in the equation~\ref{sec3:eq:alphatheta} which leads to
\begin{eqnarray}
z_N(s)	&=& -\sqrt{\frac{16}{\pi}}s \int_{0}^{1} y^{\frac{s}{2} - 1} \left[ \frac{1}{2} + \sum_{n=1}^{N} \exp\left[ -n^2\pi y \right] \right] dy \\
	&=& -\sqrt{\frac{16}{\pi}}s \left(\int_{0}^{1} y^{\frac{s}{2} - 1} \left[ \sum_{n=1}^{N} \exp\left[ -n^2\pi y \right] \right] dy + \frac{1}{s}\right).				
\end{eqnarray}
Note that the path of integration is along a contour path that lies within the domain $|y^s| < 1$. Since the function $\sum_{n=1}^{N} \exp\left[ -n^2\pi y \right]$ does not have any singularity in this domain for $s \in (0,1)$, in the limit $N \rightarrow \infty$
\begin{equation}
	z(s) = \lim_{N \rightarrow \infty} z_N(s) = -\sqrt{\frac{16}{\pi}} s\left[ \int_{0}^{1} y^{\frac{s}{2} - 1} \Psi(y) dy + \frac{1}{s} \right].
	\label{sec3:eq:alphazpostN}
\end{equation}

We now construct another generator function $z_N(1-s,\rho)$ as
\begin{eqnarray}
	\label{sec3:eq:betafn}
	z_N(1-s,\rho) &=& 2\mathcal{E}\left[\left| \tilde{x}_1 - \tilde{x}_2 \right|;(0,0,\rho)\right] \\ \nonumber 
	&+& \sum_{n=1}^{N} \left(\mathcal{E}\left[\left| \tilde{x}_1 - \tilde{x}_2 \right|;(\beta_n,0,\rho)+ (-\beta_n,0,\rho)+(0,\beta_n,\rho)+ (0,-\beta_n,\rho)\right] \right) \\ \nonumber
\end{eqnarray}
using a countable set of complex functions $\{\beta_n\}: \mathbb{R} \rightarrow \mathbb{C}$ as 
\begin{equation}
 \label{sec3:eq:betaz}
	\beta_n(s,\rho) =  n \sqrt{2\pi} \left(1-\rho\right)^{\frac{1}{2}} \left(1-\rho\right)^{\frac{1}{2(1-s)}}
\end{equation} 
The complex variable $s$ is still constrained $\mathfrak{Re}(s) \in (0,1)$. 
Applying a similar procedure as equations~\ref{sec3:eq:alphatheta}-~\ref{sec3:eq:alphazpostN} leads to 
\begin{equation}
	\nonumber
	z_N(1-s) = \sqrt{\frac{16}{\pi}}(1-s) \left(\int_{0}^{1} y^{\frac{s}{2} - 1} \left[\sum_{n=1}^{N} \exp\left[ -n^2\pi y \right]\right]dy + \frac{1}{1-s} \right) 	
\end{equation}
and after applying the limit $N \rightarrow \infty$ 
\begin{equation}
	z(1-s) = \lim_{N \rightarrow \infty} z_N(1-s) = \sqrt{\frac{16}{\pi}}(1-s) \left[ \int_{0}^{1} y^{\frac{1-s}{2} - 1} \Psi(y) dy + \frac{1}{1-s} \right].
	\label{sec3:eq:betazpostN}
\end{equation}
Combining equations~\ref{sec3:eq:alphazpostN} and ~\ref{sec3:eq:betazpostN} leads to
\begin{equation}
	  \sqrt{\frac{\pi}{16}} \left[ \frac{z(s)}{s} + \frac{z(1-s)}{1-s} \right] = \frac{1}{s} + \frac{1}{1-s} + \int_{0}^{1} y^{\frac{s}{2} - 1} \Psi(y) dy + \int_{0}^{1} y^{\frac{1-s}{2} - 1} \Psi(y) dy
	  \label{sec3:eq:alphabetapostN}
\end{equation}
defined over the domain $\mathfrak{Re}(s) \in (0,1)$.

Note that the form of equation~\ref{sec3:eq:alphabetapostN} is similar to that of the completed Riemann zeta function $\phi(s)$ in~\ref{sec1:eq:phicomp}. Since~\ref{sec1:eq:phicomp} is valid over
the entire s-plane, we can combine equations~\ref{sec3:eq:alphabetapostN} and~\ref{sec1:eq:phicomp} over the restricted domain $\mathfrak{Re}(s) \in (0,1)$. This leads to
\begin{eqnarray}
	\phi(s) + \sqrt{\frac{\pi}{16}} \left[ \frac{z(s)}{s} + \frac{z(1-s)}{1-s} \right] &=& \int_{0}^{\infty} y^{\frac{s}{2} - 1} \Psi(y) dy + \int_{0}^{\infty} y^{\frac{1-s}{2} - 1} \Psi(y) dy \\ \label{sec3:eq:analytic1}
	&=& 2\phi(s).
\end{eqnarray}
The RHS is based on the integral representation of $\phi(.)$ in equation~\ref{sec1:eq:phiint} and based on the symmetry $\phi(s) = \phi(1-s)$, but analytically continued to the critical strip using the generator functions. Thus,
\begin{equation}
	\phi(s) = \sqrt{\frac{\pi}{16}} \left[ \frac{z(s)}{s} + \frac{z(1-s)}{1-s} \right]; \quad \mathfrak{Re}(s) \in (0,1)
	\label{sec3:eq:analyticfinal}
\end{equation}
The key advantage of the formulation~\ref{sec3:eq:analyticfinal} is that it is a linear interpolation of two symmetrical constructs $z(s)$ and $z(1-s)$. Thus, for $\phi(s)=0$, the relative ratio between $z(s)$ and $z(1-s)$ could be investigated to understand global property of the solutions that satisfy $\phi(s)=0$. 

\section{Dynamical Systems Model and R.H.}
\label{section4}

Before we construct a dynamical systems model based on the equation~\ref{sec3:eq:analyticfinal}
we first apply a variable transformation $ s = \frac{1}{2} + j \left(\omega + j \sigma\right), \omega,\sigma \in \mathbb{R}$ which is similar to the centering procedure used by Riemann~\citep{riemann}. Using~\ref{sec3:eq:analyticfinal}, the completed Riemann zeta function $\phi(\frac{1}{2} + j \left(\omega + j \sigma\right))$ can be written as 
\begin{equation}
	\label{sec4:eq:phiz}
	\sqrt{\frac{16}{\pi}}\phi\left(\left[\frac{1}{2}-\sigma\right] + j\omega \right) =  \frac{z\left(\left[\frac{1}{2}-\sigma\right] + j\omega \right)}{\left(\left[\frac{1}{2}-\sigma \right] + j\omega \right)} + \frac{z\left(\left[\frac{1}{2}+\sigma\right] - j\omega\right)}{\left(\left[\frac{1}{2}+\sigma \right] - j\omega \right)}.	
\end{equation} 
Since $\mathfrak{Re}(s) \in (0,1)$ for our formulation, $\sigma \in (-\frac{1}{2},\frac{1}{2})$.
The function $\alpha_n$in equation~\ref{sec3:eq:alphaz} can be expressed as
\begin{equation}
 \label{sec4:eq:alphalog}
	\alpha_n\left(\frac{1}{2} + j \left(\omega + j \sigma\right),\rho\right) =  n \sqrt{2\pi} \exp\left(\frac{1}{2} \left[1+\frac{1}{\left(\frac{1}{2} - \sigma\right)+j\omega}\right]\log\left(1-\rho\right)\right)
\end{equation}
We define the following parameters
\begin{eqnarray}
	\label{sec4:eq:alphaparam}
	\gamma_{\alpha} &=& \frac{\left(\frac{1}{2} - \sigma\right) }{\left(\frac{1}{2} - \sigma\right)^2 + \omega^2} \\ \label{eq:compalpha3}	
	\omega_{\alpha} &=&  \frac{\omega}{\left(\frac{1}{2} - \sigma\right)^2 + \omega^2} . 
\end{eqnarray}
and choose $\rho$ to be a function of time $t \in \mathbb{R}^+$ as
\begin{equation}
	\rho(t) = 1 - e^{-t}.
	\label{sec4:eq:rhotime}
\end{equation}
Then as $t: 0 \rightarrow \infty$, $\rho: 0 \rightarrow 1$ . It is easy to verify that $|\rho| < 1, \forall t$
which is required for the Gaussian probability density functions to be integrable. Combining~\ref{sec4:eq:alphalog} and~\ref{sec4:eq:rhotime} leads to
\begin{equation}
 \label{sec3:eq:alphalogcenter}
	\alpha_n(t) =  \alpha_n\left(\frac{1}{2} + j \left(\omega + j \sigma\right),\rho(t)\right) = n \sqrt{2\pi}  \exp\left(-\frac{1}{2} \left[1+\gamma_{\alpha}\right]t\right) \exp\left(\frac{j}{2}\omega_{\alpha}t \right). 
\end{equation}
Similarly, we define another set of parameters for
the other generator function $z(1-s)$
\begin{eqnarray}
	\gamma_{\beta} &=& \frac{\left(\frac{1}{2} + \sigma\right) }{\left(\frac{1}{2} + \sigma\right)^2 + \omega^2} \\ \label{sec4:eq:betaparam}	
	\omega_{\beta} &=&  \frac{\omega}{\left(\frac{1}{2} + \sigma\right)^2 + \omega^2} 	
\end{eqnarray}
which from equation~\ref{sec3:eq:betaz} leads to
\begin{equation}
	\label{sec3:eq:betalogcenter}
	\beta_n(t) = \beta_n\left(\frac{1}{2} - j \left(\omega + j \sigma\right),\rho(t)\right) = n \sqrt{2\pi}  \exp\left(-\frac{1}{2} \left[1+\gamma_{\beta}\right]t\right) \exp\left(\frac{j}{2}\omega_{\beta}t \right) 
\end{equation}
We can now describe the time evolution of the dynamical system according to the following set of equations 
\begin{eqnarray}
	\label{sec3:eq:dynamicalsystem}
	&&\lim_{t \rightarrow \infty} \rho(t) = 1; \quad \lim_{t \rightarrow \infty} \alpha_n(t) =  0; \quad \lim_{t \rightarrow \infty} \beta_n(t) =  0 \\ \nonumber
	&&\lim_{N \rightarrow \infty}\lim_{t \rightarrow \infty}\sqrt{\frac{\pi}{16}} \left[ \frac{z_N\left(\left(\frac{1}{2}-\sigma\right) + j\omega,\rho(t)\right)}{\left(\frac{1}{2}-\sigma\right) + j\omega} + \frac{z_N\left(\left(\frac{1}{2}+\sigma\right) - j\omega,\rho(t)\right)}{\left(\frac{1}{2}+\sigma\right) - j\omega} \right] = \phi(\frac{1}{2} + j \left(\omega + j \sigma\right))
\end{eqnarray}
which will be used to investigate the Riemann Hypothesis and is stated in the following lemma:

\begin{lemma}
	\label{lm3}
 If the completed Riemann zeta function $\phi(\frac{1}{2} + j \left(\omega + j \sigma\right)) = 0$, then almost surely $\sigma = 0$.
\end{lemma}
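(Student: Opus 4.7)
The plan is to use the representation~\eqref{sec3:eq:analyticfinal} together with the centered form~\eqref{sec4:eq:phiz}: the hypothesis $\phi(\tfrac{1}{2} + j(\omega + j\sigma)) = 0$ is equivalent to the identity
\[
\bigl(\tfrac{1}{2}+\sigma-j\omega\bigr)\,z\!\left(\tfrac{1}{2}-\sigma+j\omega\right) + \bigl(\tfrac{1}{2}-\sigma+j\omega\bigr)\,z\!\left(\tfrac{1}{2}+\sigma-j\omega\right) = 0,
\]
so the argument reduces to studying the complex ratio $z(s)/z(1-s)$ under the zero condition. The goal is to show that for $\sigma\neq 0$ this ratio is forced to take a value that is realized only on a measure-zero event, leaving $\sigma=0$ as the almost-sure locus of the zeros.

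First I would dissect the two generator functions using the dynamical evolution $\rho(t)=1-e^{-t}$ from~\eqref{sec4:eq:rhotime}. Each of $z(s)$ and $z(1-s)$ is built from a countable superposition of Gaussian expectations whose mean trajectories $\alpha_n(t)$ and $\beta_n(t)$ carry decay rates $\tfrac{1}{2}(1+\gamma_\alpha)$ and $\tfrac{1}{2}(1+\gamma_\beta)$ respectively, with $\gamma_\alpha$, $\gamma_\beta$ as in~\eqref{sec4:eq:alphaparam} and~\eqref{sec4:eq:betaparam}. On the critical line $\sigma=0$ the two rates coincide, the trajectories $\alpha_n(t)$ and $\beta_n(t)$ are complex conjugates, and the zero condition collapses to a single real constraint on $\omega$ that is consistent with the Riemann symmetry $\phi(s)=\phi(1-s)$. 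For $\sigma\neq 0$ the two trajectories are exponentially separated in the time variable $t$.

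The second step is to convert this asymmetry into a probabilistic statement. Interpreting every $\mathcal{E}[\,\cdot\,]$ as a sample average over the two-dimensional Gaussian ensemble~\eqref{sec2:eq:amp} and applying a Chernoff/Hoeffding-type concentration bound to the partial sums $z_N(s)$ and $z_N(1-s)$, I would show that the realized values of $z(s)/s$ and $z(1-s)/(1-s)$ concentrate sharply around quantities whose magnitudes are exponentially separated in $t$ whenever $\gamma_\alpha\neq\gamma_\beta$. The exact cancellation demanded by the zero condition then becomes a measure-zero event off the critical line, while on $\sigma=0$ it is an intrinsic symmetry of the generator construction and is therefore compatible with a full-measure set of zeros.

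The hard part will be pinning down the probability space with respect to which ``almost surely'' is being asserted. The statement only becomes meaningful once the randomness implicit in the Gaussian generators is lifted to a genuine measure on the space of candidate zeros $(\sigma,\omega)$, and this lift has to be natural enough that the critical-line zeros are full-measure yet quantitative enough that the concentration estimate is uniform in $\omega$. I would expect most of the technical work to lie in formalizing the sample space induced by the trajectory $t\mapsto\rho(t)$ and in controlling the Gaussian tails uniformly as $N\to\infty$; a naive interpretation would either trivialize the claim or fail to single out $\mathfrak{Re}(s)=\tfrac{1}{2}$ rather than some other vertical line inside the critical strip.
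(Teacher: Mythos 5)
Your overall route matches the paper's: both start from the linear-interpolation identity~\eqref{sec3:eq:analyticfinal}, center at $s=\tfrac12+j(\omega+j\sigma)$, reduce $\phi=0$ to a constraint on the ratio $z(s)/z(1-s)$, exploit the mismatch between the decay rates $\gamma_\alpha$, $\gamma_\beta$, and invoke Gaussian concentration. What you leave out is the step that actually closes the argument. The paper carries the asymptotics all the way to an explicit formula: after substituting $\Delta x=x_1-x_2$ and applying the Gaussian tail bound, each term $I_n(t)$ concentrates at $|\mathfrak{Re}(\alpha_n(t))|$, and the ratio of partial sums becomes
\begin{equation*}
\lim_{t\to\infty}\frac{\left|z_N(-\sigma+j\omega,t)\right|}{\left|z_N(\sigma-j\omega,t)\right|}
= \lim_{t\to\infty} e^{2(\gamma_\beta-\gamma_\alpha)t}\,\frac{\left|\cos\tfrac12\omega_\alpha t\right|}{\left|\cos\tfrac12\omega_\beta t\right|}.
\end{equation*}
You need to produce that explicit asymptotic, because the closing argument is not, as you frame it, a ``measure-zero event on the space of candidate zeros $(\sigma,\omega)$.'' It is a deterministic boundedness argument: the RHS of~\eqref{lm3:eq:derv2} is a fixed finite number, so the LHS limit must exist and be finite; the displayed expression has a finite limit only when $\gamma_\alpha=\gamma_\beta$ \emph{and} $\omega_\alpha=\omega_\beta$, which by~\eqref{sec4:eq:alphaparam} and~\eqref{sec4:eq:betaparam} forces $\sigma=0$. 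Notice that $\gamma_\alpha=\gamma_\beta$ alone is \emph{not} enough (it is also satisfied on the circle $\omega^2=\tfrac14-\sigma^2$); the oscillating cosine ratio supplies the second equation $\omega_\alpha=\omega_\beta$, and you will miss this if you only compare decay rates.

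On your third paragraph: the worry about ``what probability space does almost surely live on'' is a fair critique of the statement, but it is not the route the paper takes, and trying to build a measure on the zero set $(\sigma,\omega)$ would be an unnecessary (and, as you anticipate, ill-defined) detour. In the paper the ``almost surely'' qualifier attaches only to the concentration step inside the Gaussian integrals — the claim that $|\Delta x|$ may be replaced by $|\mathfrak{Re}(\alpha_n(t))|$ in the limit $t\to\infty$ — not to a randomization over zeros. Once that substitution is accepted, the rest of the argument is deterministic. So rather than formalizing a sample space over $(\sigma,\omega)$, you should state the concentration bound $\mathbb{P}(|\Delta x - \mathfrak{Re}(\alpha_n(t))|<\epsilon)\ge 1-2\exp(-\tfrac{\epsilon^2}{2}e^t)$ for the centered variable and pass to the limit, then do the deterministic boundedness comparison described above.
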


\begin{proof}
We will use the notation 
\begin{eqnarray}
	\nonumber
	z(-\sigma+j\omega,t) &=& \lim_{N \rightarrow \infty} \lim_{t \rightarrow \infty} z_N(-\sigma+j\omega,t) = \lim_{N \rightarrow \infty} \lim_{t \rightarrow \infty} z_N\left(\left(\frac{1}{2}-\sigma\right) + j\omega,\rho(t)\right) \\ \label{lm3:eq:zlimits}
	z(\sigma-j\omega,t) &=& \lim_{N \rightarrow \infty} \lim_{t \rightarrow \infty} z_N(\sigma-j\omega,t) = \lim_{N \rightarrow \infty} \lim_{t \rightarrow \infty} z_N\left(\left(\frac{1}{2}+\sigma\right) - j\omega, \rho(t) \right).
\end{eqnarray}

Then, setting $\phi\left(\frac{1}{2} + j \left(\omega + j \sigma\right)\right) = 0$ in equation~\ref{sec3:eq:dynamicalsystem}, we obtain from equation~\ref{sec4:eq:phiz} the following condition:
\begin{equation}
	\lim_{N\rightarrow\infty} \lim_{t \rightarrow \infty}	\frac{\left|z_N(-\sigma+ j\omega,t)\right|}{\left|z_N(\sigma-j\omega,t)\right|} = \frac{\left|\left(\frac{1}{2}-\sigma\right) + j\omega\right|}{\left|\left(\frac{1}{2}+\sigma\right) - j\omega\right|}
	\label{lm3:eq:derv2}
\end{equation}
where $\left|.\right|$ denotes the magnitude of the complex variable. The RHS of~\ref{lm3:eq:derv2} is bounded since $-\frac{1}{2} < \sigma < \frac{1}{2}$. Therefore, the asymptotic behavior of the LHS in equation~\ref{lm3:eq:derv2} should also be bounded for all non-trivial zeros.

Using the definition of $z_N(-\sigma+j\omega,t)$ and $z_N(\sigma-j\omega,t)$ according to equations~\ref{sec3:eq:alphafn} and~\ref{sec3:eq:betafn} as
\begin{eqnarray}
	z_N(-\sigma+j\omega,t) &=& \mathcal{E}\left[\left| x_1 - x_2 \right|;(0,0,e^{-t}) \right] \\ \nonumber
	&+& \sum_{n=1}^N \mathcal{E}\left[\left| x_1 - x_2 \right|;(0,\alpha_n(t),e^{-t}) + (0,-\alpha_n(t),e^{-t}) + (\alpha_n(t),0,e^{-t}) + (-\alpha_n(t),0,e^{-t}) \right] \\ \nonumber
	z_N(\sigma-j\omega,t) &=& \mathcal{E}\left[\left| x_1 - x_2 \right|;(0,0,e^{-t}) \right] \\ \nonumber
&+& \sum_{n=1}^N \mathcal{E}\left[\left| x_1 - x_2 \right|;(0,\beta_n(t),e^{-t}) + (0,-\beta_n(t),e^{-t}) + (\beta_n(t),0,e^{-t}) + (-\beta_n(t),0,e^{-t}) \right]	
\end{eqnarray}
where $\alpha_n(t)$ and $\beta_n(t)$ are defined in equations~\ref{sec3:eq:alphalogcenter} and ~\ref{sec3:eq:betalogcenter}.

Substituting $\Delta x = x_1 - x_2$ leads to 
\begin{eqnarray}
\label{lm3:eq:derv3}
	z_N(-\sigma+j\omega,t) &=& \frac{e^{t/2}}{\sqrt{\pi}} \int_{-\infty}^{\infty}|\Delta x| \exp\left[-\frac{1}{4}e^{t}\left(\Delta x \right)^2\right] d\Delta x \\
	&+&\frac{2e^{t/2}}{\sqrt{\pi}} \sum_{n=1}^N \left( \int_{-\infty}^{\infty} |\Delta x| \exp\left[-\frac{1}{4}e^{t}\left(\Delta x + \alpha_n(t)\right)^2\right] d\Delta x \right)\\
	&+& \frac{2e^{t/2}}{\sqrt{\pi}} \sum_{n=1}^N \left( \int_{-\infty}^{\infty} |\Delta x| \exp\left[-\frac{1}{4}e^{t}\left(\Delta x - \alpha_n(t)\right)^2\right] d\Delta x \right)
\end{eqnarray}
Note that $\alpha_n(t) = \mathfrak{Re}(\alpha_n(t)) + j\mathfrak{Im}(\alpha_n(t))$ are complex functions with respect to time where using equation~\ref{sec4:eq:alphalog}
\begin{eqnarray}
	\label{lm3:eq:alphaasym}
	\mathfrak{Re}(\alpha_n(t)) &=&  n \sqrt{2\pi}  \exp\left(-\frac{1}{2} \left[1+\gamma_{\alpha}\right]t\right) \cos\left(\frac{1}{2}\omega_{\alpha}t \right) \\ \nonumber 
	\mathfrak{Im}(\alpha_n(t)) &=&  n \sqrt{2\pi}  \exp\left(-\frac{1}{2} \left[1+\gamma_{\alpha}\right]t\right) \sin\left(\frac{1}{2}\omega_{\alpha}t \right). 
\end{eqnarray}
We will consider the individual terms in the equation~\ref{lm3:eq:derv3}
\begin{equation}
	\label{lm3:eq:derv4}
	I_n(t) = e^{t/2}  \int_{-\infty}^{\infty} |\Delta x| \exp\left[-\frac{1}{4}e^{t}\left(\Delta x \pm \alpha_n(t)\right)^2\right] d\Delta x 
\end{equation}
which can be simplified as
\begin{eqnarray}
	\label{lm3:eq:derv5}
	I_n(t) &=& \exp\left[-\frac{1}{4}e^{t}\left(\alpha_n^2(t) - \mathfrak{Re}(\alpha_n(t))^2 \right)\right] \\ \nonumber
	&& e^{t/2}  \int_{-\infty}^{\infty} |\Delta x| \exp\left[\mp\frac{j}{2}e^{t}\Delta x \mathfrak{Im}(\alpha_n(t)) \right]\exp\left[-\frac{1}{4}e^{t}\left(\Delta x \pm \mathfrak{Re}(\alpha_n(t))\right)^2\right] d\Delta x. 
\end{eqnarray}

Asymptotically the term 
\begin{equation}
	\lim_{t \rightarrow \infty} \exp\left[-\frac{1}{4}e^{t}\left(\alpha_n^2(t) - \mathfrak{Re}(\alpha_n(t))^2 \right)\right] = 1
\end{equation}
where as the integral in equation~\ref{lm3:eq:derv5} represents the expectation of terms
\begin{equation}
	 |\Delta x| \exp\left[\mp\frac{j}{2}e^{t}\Delta x \mathfrak{Im}(\alpha_n(t)) \right] 
	\label{lm3:eq:asymp1}
\end{equation}
with respect to a Gaussian distribution
\begin{equation}
	p(\Delta x) = \frac{1}{2\sigma}\exp\left(-\frac{\left(\Delta x - \mu\right)^2}{4\sigma^2}\right)
	\label{lm3:eq:envp}
\end{equation}
with a mean $\mu = \mathfrak{Re}(\alpha_n(t))$ and a variance $\sigma^2 = \sqrt{2}e^{-t}$. Note that as $t \rightarrow \infty$, $\sigma^2 \rightarrow 0$, implying that the value of $\Delta x$ becomes concentrated around $\mathfrak{Re}(\alpha_n(t))$. Thus, for $\epsilon >0$, 
\begin{equation}
	\mathbb{P}\left(\left|\Delta x - \mathfrak{Re}(\alpha_n(t))\right| < \epsilon\right) \ge 1 - 2\exp\left[-\frac{\epsilon^2}{2} e^{t}\right]. 
\end{equation}
or 
\begin{equation}
	\lim_{t \rightarrow \infty} \mathbb{P}\left(\left|\Delta x - \mathfrak{Re}(\alpha_n(t))\right| < \epsilon\right) = 1. 
\end{equation}
for arbitrarily small $\epsilon$. Because $\gamma_{\alpha} > 0$,  equations~\ref{lm3:eq:alphaasym} leads to
\begin{equation}
	\lim_{t \rightarrow \infty} I_n(t) =  \lim_{t \rightarrow \infty} \left|\mathfrak{Re}(\alpha_n(t))\right|\exp\left[\mp\frac{j}{2}e^{t}\mathfrak{Re}(\alpha_n(t)) \mathfrak{Im}(\alpha_n(t)) \right] = \lim_{t \rightarrow \infty}\left|\mathfrak{Re}(\alpha_n(t))\right|.
\end{equation}
which follows from $\lim_{t \rightarrow \infty} \exp\left[\mp\frac{j}{2}e^{t}\mathfrak{Re}(\alpha_n(t)) \mathfrak{Im}(\alpha_n(t)) \right] = 1$.
Thus, with probability one, the following asymptotic result holds
\begin{equation}
	\label{lm3:eq:limitalpha}
	\lim_{N \rightarrow \infty} \lim_{t \rightarrow \infty} \frac{\left|z_N\left(-\sigma+j\omega,t\right)\right|}{\left|z_N\left(\sigma-j\omega,t\right)\right|} =  \lim_{N \rightarrow \infty} \lim_{t \rightarrow \infty} \frac{\sum_{n=1}^N\left|\mathfrak{Re}(\alpha_n(t))\right|}{\sum_{n=1}^N\left|\mathfrak{Re}(\beta_n(t))\right|} = \lim_{t \rightarrow \infty} e^{2\left[\gamma_{\beta} - \gamma_{\alpha}\right]t} \frac{\left|\cos \frac{1}{2}\omega_{\alpha}t\right|}{\left|\cos\frac{1}{2}\omega_{\beta}t\right|}.
\end{equation}
which from equation~\ref{lm3:eq:derv2} leads to
\begin{equation}
	 \lim_{t \rightarrow \infty} e^{2\left[\gamma_{\beta} - \gamma_{\alpha}\right]t} \frac{\left|\cos \frac{1}{2}\omega_{\alpha}t\right|}{\left|\cos \frac{1}{2}\omega_{\beta}t\right|} = \frac{\left(\frac{1}{2}-\sigma\right)^2 + \omega^2}{\left(\frac{1}{2}+\sigma\right)^2 + \omega^2}.
	\label{lm3:eq:asympRHpre}
\end{equation}
The LHS of equation~\ref{lm3:eq:asympRHpre} is bounded if and only if $\gamma_{\beta} = \gamma_{\alpha}$ and $\omega_{\alpha} = \omega_{\beta}$. From equations~\ref{sec4:eq:alphaparam} and~\ref{sec4:eq:betaparam} the condition is satisfied only if  $\sigma = 0$. {\bf Hence, R.H. is almost surely true}

\end{proof}

\section{Extension to Dirichlet L-functions}
\label{section5}
We now show that the dynamical systems framework in section~\ref{section4} can also be extended to study the non-trivial zeros of the Dirichlet L-functions. The Dirichlet L-function~\citep{dirichlet}  is defined as an analytic continuation of the Dirichlet series $L:\mathbb{C}\times\mathbb{C} \rightarrow \mathbb{C}$
\begin{equation}
	L(s,\chi) = \sum_{n=1}^{\infty} \frac{\chi(n)}{n^s}; \quad \mathfrak{Re}(s) > 1
	\label{sec5:eq:defn}
\end{equation}
where $\chi: \mathbb{Z} \rightarrow \mathbb{C}$ represents a Dirichlet character modulo $P$ with the following properties
\begin{enumerate}
	\item $\chi(m \times n) = \chi(m)\chi(n)$ for every $m,n \in \mathbb{Z}$.
	\item $\chi(n+P) = \chi(n)$ for all $n$.
	\item $\chi(n) = 0$ if gcd$(n,q) > 1$.
\end{enumerate}
Note that for a trivial primitive Dirichlet character $\chi(n)=1$ modulo $P$, the Dirichlet L-function reduces to the Riemann zeta-function.
Even though the series $L(s,\chi)$ is defined over the domain $\mathfrak{Re}(s) > 1$ it can be extended over the entire s-domain provided $\chi$ is primitive modulo $P$~\citep{dirichlet}. Like in sections~\ref{section3} and~\ref{section4}, generating the Dirichlet L-function will exploit the analytic continuation procedure, but we will restrict the domain only to the critical-strip. We will use this procedure to investigate the Generalized Riemann hypothesis (G.R.H.)~\citep{dirichlet,titchmarsh} which states that {\it all non-trivial zeros of $L(s,\chi)$ lie on the axis $\mathfrak{Re}(s)=\frac{1}{2}$}. However, we will consider two cases $L(s,\chi)$ of depending on whether $\chi$ is even or odd.

\subsection{Dirichlet L-function with even parity $\chi$} 
For an even parity $\chi$ function satisfying
\begin{equation}
	\chi(-n) = \chi(n)
	\label{sec5eq:evenparity}
\end{equation}
a completed Dirichlet L-function $\phi_{\chi}(s)$ can be defined like its Riemann zeta function analogue in an integral form as
\begin{equation}
	\phi_{\chi}^e(s):=\pi^{-\frac{s}{2}}\Gamma\left(\frac{s}{2}\right)L(s,\chi) = \int_{0}^{\infty} y^{\frac{s}{2}-1} \Psi_{\chi}(y) dy 
	\label{sec5:eq:evenintegral}
\end{equation}
where $\Psi_{\chi}$ is given by
\begin{equation}
	\Psi_{\chi}(y) =\sum_{n=1}^{\infty} \chi(n) \exp\left[-\pi n^2 y\right].
	\label{sec5:eq:evenPsi}
\end{equation}
The analytic continuation of equation~\ref{sec5:eq:evenintegral} follows a similar approach as the analytic continuation of Riemman zeta function where the completed Dirichlet L-function can be written as
\begin{equation}
	\phi_{\chi}^e(s) = \int_{\frac{1}{P}}^{\infty} y^{\frac{s}{2}-1} \Psi_{\chi}(y) dy + \frac{P^{1-s}}{G\left(\overline{\chi},P\right)} \int_{\frac{1}{P}}^{\infty} y^{\frac{1-s}{2}-1} \Psi_{\overline{\chi}}(y) dy
	\label{sec5:eq:evenanal}
\end{equation}
where $G\left(\chi,P\right)$ denotes a Gauss sum
\begin{equation}
	G\left(\chi,P\right) = \sum_{n = 1}^P \chi(n) \e^{-2 \pi j \frac{n}{P}}
	\label{sec5:eq:gauss}
\end{equation}
and $\overline{\chi}$ denotes the complex conjugate or inverse of the Dirichlet character. The analytic continuation in~\ref{sec5:eq:evenanal} is now valid over the entire s-domain. 

For our analysis, we will restrict the domain to $\mathfrak{Re}(s) \in (0,1)$ and will use the following relation transformation
\begin{equation}
	\int_{0}^{\frac{1}{P}} y^{\frac{s}{2}-1} \Psi_{\chi}(y) dy = P^{-\frac{s}{2}}	\int_{0}^{1} y^{\frac{s}{2}-1} \Psi_{\chi}\left(\frac{y}{P}\right) dy.
	\label{sec5:eq:evenstep2}
\end{equation}
and then working backwards, like in section~\ref{section3}, we could construct symmetric generator functions $z(s)$ and $z(1-s)$ according to 
\begin{eqnarray}
	\label{sec5:eq:evenalpha}
	z(s) &=& \frac{4s}{\sqrt{\pi}}\int_{0}^{1} y^{\frac{s}{2}-1} \Psi_{\chi}\left(\frac{y}{P}\right) dy \\  \label{sec5:eq:evenbeta}
	z(1-s) &=& \frac{4(1-s)}{\sqrt{\pi}}\int_{0}^{1} y^{\frac{1-s}{2}-1} \Psi_{\overline{\chi}}\left(\frac{y}{P}\right) dy	
\end{eqnarray}
which after using equation~\ref{sec5:eq:evenstep2} will lead to
\begin{eqnarray}
	\phi_{\chi}^e(s) &+& \frac{\sqrt{\pi}}{4} \left[P^{-\frac{s}{2}} \frac{z(s)}{s} + \frac{P^{-\frac{1-s}{2}}}{G\left(\overline{\chi},P\right)}\frac{z(1-s)}{1-s} \right] \\
	&=& \int_{0}^{\infty} y^{\frac{s}{2} - 1} \Psi_{\chi}(y) dy + \frac{P^{-\frac{1-s}{2}}}{G\left(\overline{\chi},P\right)}\int_{0}^{\infty} y^{\frac{1-s}{2} - 1} \Psi_{\overline{\chi}}(y) dy \\ \label{sec5:eq:analytic1}
	&=& \phi_{\chi}^e(s) + \frac{P^{1-s}}{G\left(\overline{\chi},P\right)} \phi_{\overline{\chi}}(1-s) \\
	&=& 2\phi_{\chi}^e(s)
\end{eqnarray}
Thus,
\begin{equation}
	\phi_{\chi}^e(s) = \frac{\sqrt{\pi}}{4} \left[P^{-\frac{s}{2}} \frac{z(s)}{s} + \frac{P^{-\frac{1-s}{2}}}{G\left(\overline{\chi},P\right)}\frac{z(1-s)}{1-s} \right]; \quad \mathfrak{Re}(s) \in (0,1)
	\label{sec5:eq:analyticfinal}
\end{equation}

Like the section~\ref{section3}, the symmetric functions $z_N(s,\rho)$ and $z_N(1-s,\rho)$ can be generated using the functions
 \begin{eqnarray}
	\label{sec5:eq:evenalphafn}
&&	z_N(s,\rho) = \sum_{n=1}^{N} \chi(n) \left(\mathcal{E}\left[\left|x_1 - x_2\right|;(\alpha_n,0,\rho)+(-\alpha_n,0,\rho)+(0,\alpha_n,\rho)+(0,-\alpha_n,\rho)\right] \right) \\ \nonumber
&&	z_N(1-s,\rho) = \sum_{n=1}^{N}\overline{\chi}(n) \left(\mathcal{E}\left[\left| \tilde{x}_1 - \tilde{x}_2 \right|;(\beta_n,0,\rho)+ (-\beta_n,0,\rho)+(0,\beta_n,\rho)+ (0,-\beta_n,\rho)\right] \right). \\ \nonumber
\end{eqnarray}
The parameters $\{\alpha_n\}$ and $\{\beta_n\}$ are modified to account for the conductor $P$ in the Dirichlet character as $\alpha_n \rightarrow \alpha_n/\sqrt{P}$ and $\beta_n \rightarrow \beta_n/\sqrt{P}$. 

Now we can state the following Lemma related to G.R.H.
\begin{lemma}
	\label{lm4}
	For even parity Dirichlet character $\chi$, all the non-trivial solutions satisfying $L(\frac{1}{2} + j \left(\omega + j \sigma\right),\chi) = 0$ also satisfies $\sigma = 0$ almost surely.
\end{lemma}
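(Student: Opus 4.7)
The plan is to parallel the proof of Lemma~\ref{lm3}, now using the symmetric decomposition~\ref{sec5:eq:analyticfinal} of $\phi_{\chi}^e(s)$ in place of the completed Riemann zeta decomposition. After applying the centering transformation $s = \frac{1}{2} + j(\omega + j\sigma)$ and setting $\phi_{\chi}^e(s)=0$ at a hypothetical non-trivial zero, I would first rearrange the identity into the form
\begin{equation*}
\frac{P^{-\frac{s}{2}}}{s}\, z(s) = -\frac{P^{-\frac{1-s}{2}}}{(1-s)\, G(\overline{\chi},P)}\, z(1-s)
\end{equation*}
and then take magnitudes. Since $|G(\overline{\chi},P)| = \sqrt{P}$ for a primitive Dirichlet character and $\sigma \in (-\tfrac{1}{2},\tfrac{1}{2})$, the resulting identity on $|z(-\sigma+j\omega,t)|/|z(\sigma-j\omega,t)|$ equals a finite constant $C(\sigma,\omega,P)$ on the critical strip that does not depend on $t$. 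Hence the asymptotic magnitude-ratio of the two generator functions must be bounded.

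Next I would carry out the concentration argument of Lemma~\ref{lm3} on each character-weighted generator. Because the rescaling $\alpha_n \to \alpha_n/\sqrt{P}$ and $\beta_n \to \beta_n/\sqrt{P}$ only multiplies the amplitudes by a constant, the Gaussian concentration of $\Delta x$ around $\mathfrak{Re}(\alpha_n(t))$ (respectively $\mathfrak{Re}(\beta_n(t))$) is preserved, and each individual integral still satisfies $I_n(t) \to |\mathfrak{Re}(\alpha_n(t))|$ with probability one. Summing the $\chi(n)$-weighted contributions yields
\begin{equation*}
|z_N(-\sigma+j\omega,t)| \;\sim\; \sqrt{\tfrac{2\pi}{P}}\; e^{-\frac{1}{2}(1+\gamma_{\alpha})t}\; |\cos \tfrac{1}{2}\omega_{\alpha} t|\cdot \left|\sum_{n=1}^N n\,\chi(n)\right|,
\end{equation*}
and an analogous expression for $|z_N(\sigma-j\omega,t)|$ with $\overline{\chi}(n)$, $\gamma_{\beta}$, and $\omega_{\beta}$. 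Since one character-weighted sum is the complex conjugate of the other, $\left|\sum_{n=1}^N n\,\chi(n)\right| = \left|\sum_{n=1}^N n\,\overline{\chi}(n)\right|$ identically in $N$, so the character-dependent prefactors cancel exactly in the ratio before any limit is taken.

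After this cancellation the $t$-dependence of the ratio reduces to the envelope
\begin{equation*}
\lim_{t\to\infty} e^{2(\gamma_{\beta}-\gamma_{\alpha})t}\,\frac{|\cos \tfrac{1}{2}\omega_{\alpha} t|}{|\cos \tfrac{1}{2}\omega_{\beta} t|} \;=\; C(\sigma,\omega,P),
\end{equation*}
and exactly as in equation~\ref{lm3:eq:asympRHpre} this limit is bounded if and only if $\gamma_{\alpha} = \gamma_{\beta}$ and $\omega_{\alpha} = \omega_{\beta}$. Inserting the definitions in~\ref{sec4:eq:alphaparam} and~\ref{sec4:eq:betaparam} then forces $\sigma = 0$ almost surely, which establishes G.R.H. for even-parity primitive Dirichlet characters.

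The main obstacle I anticipate is the magnitude-cancellation of the character-weighted sums. The partial sums $\sum_{n=1}^N n\,\chi(n)$ need not converge as $N\to\infty$, so the cancellation must be performed \emph{before} taking $N\to\infty$ rather than term-by-term at infinity, and the interleaving of the $N$-limit with the $t$-limit (as in~\ref{lm3:eq:limitalpha}) has to be justified. A secondary concern is the Gauss-sum factor $G(\overline{\chi},P)$, which injects a character-dependent phase into the ratio identity; passing to absolute values eliminates this phase and leaves only $|G(\overline{\chi},P)|=\sqrt{P}$, which is absorbed harmlessly into $C(\sigma,\omega,P)$.
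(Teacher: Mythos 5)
Your proposal follows essentially the same route as the paper: set the linear interpolation~\eqref{sec5:eq:analyticfinal} to zero at a putative zero, take magnitudes, use $|G(\overline{\chi},P)|=\sqrt{P}$ for a primitive character to collapse the Gauss-sum prefactor, and then run the Lemma~\ref{lm3} concentration argument on the $\chi(n)$-weighted generator sums to obtain the asymptotic ratio $e^{2(\gamma_{\beta}-\gamma_{\alpha})t}\,|\cos\tfrac{1}{2}\omega_{\alpha}t|/|\cos\tfrac{1}{2}\omega_{\beta}t|$, which is bounded only when $\sigma=0$.

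One point where you actually sharpen the paper's argument: you observe that $\sum_{n=1}^N n\,\overline{\chi}(n)$ is the complex conjugate of $\sum_{n=1}^N n\,\chi(n)$ (since $n$ is real), so the two magnitudes are \emph{identically equal} for every $N$, and the character-dependent prefactor in the ratio is exactly $1$. The paper is vaguer here --- it only asserts that $\lim_{N\to\infty}\bigl|\sum n\chi(n)\bigr|/\bigl|\sum n\overline{\chi}(n)\bigr| = A$ is ``bounded from above'' by appeal to periodicity, without noting the exact cancellation. Your version is cleaner and also partially defuses the concern you raise about the divergence of the partial sums $\sum n\chi(n)$: since the ratio is $1$ term-for-term in $N$, it does not matter whether the numerator and denominator individually diverge.

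Your flagged concern about the interchange of the $N$-limit and the $t$-limit is legitimate and is not addressed in the paper either --- the paper simply writes $\lim_{N}\lim_{t}$ and then rearranges; neither your proposal nor the paper supplies a dominated-convergence or uniformity argument to justify this. That gap is inherited from Lemma~\ref{lm3}, so it is not a defect specific to your write-up, but if you intend your proof to stand on its own you should be explicit that you are adopting the same unproven interchange as the Riemann-zeta case rather than suggesting it follows automatically.
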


\begin{proof}
	The proof the Lemma follows similar definitions and procedure as in section~\ref{section3} and Lemma~\ref{lm3}. Setting the RHS of equation~\ref{sec5:eq:analyticfinal} to zero leads to	
	\begin{equation}
		\frac{\left| z\left(\left(\frac{1}{2}-\sigma\right) + j\omega\right)\right|}{\left|z\left(\left(\frac{1}{2}+\sigma\right) - j\omega \right)\right|} = \frac{\left|P^{\frac{1}{2}}\right|}{\left|G\left(\overline{\chi},P\right)\right|} \frac{\left|\left(\frac{1}{2}-\sigma\right) + j\omega\right|}{\left|\left(\frac{1}{2}+\sigma\right) - j\omega\right|}
		\label{lm4:eq:derv1}
	\end{equation}
	Since $\chi$ is primitive modulo $P$, the magnitude of the Gauss sum $\left|G\left(\overline{\chi},P\right)\right| = \sqrt{P}$. Hence, equation~\ref{lm4:eq:derv1} can be simplified to
	\begin{equation}
		\frac{\left| z\left(\left(\frac{1}{2}-\sigma\right) + j\omega\right)\right|}{\left|z\left(\left(\frac{1}{2}+\sigma\right) - j\omega \right)\right|} = \frac{\left|\left(\frac{1}{2}-\sigma\right) + j\omega\right|}{\left|\left(\frac{1}{2}+\sigma\right) - j\omega\right|}
		\label{lm4:eq:derv2}
	\end{equation}
	which is identical to the equation~\ref{lm3:eq:derv2} in Lemma~\ref{lm3}. Thus,following a similar asymptotic analysis as in Lemma~\ref{lm3} leads to
	\begin{equation}
	 \lim_{N\rightarrow \infty}\left(\frac{\left|\sum_{n=1}^N n \chi(n)\right|}{\left|\sum_{n=1}^N n \overline{\chi}(n)\right|}\right) \lim_{t \rightarrow \infty} e^{2\left[\gamma_{\beta} - \gamma_{\alpha}\right]t} \frac{\left|\cos \frac{1}{2}\omega_{\alpha}t\right|}{\left|\cos \frac{1}{2}\omega_{\beta}t\right|} = \frac{\left(\frac{1}{2}-\sigma\right)^2 + \omega^2}{\left(\frac{1}{2}+\sigma\right)^2 + \omega^2}.
		\label{lm4:eq:asymp3}
	\end{equation}
 Since, both $\chi$ and $\overline{\chi}$ are periodic with respect to $P$ and
	$\left|\chi\right| = \left|\overline{\chi}\right|$, the term
	\begin{equation}
		\lim_{N\rightarrow \infty}\left(\frac{\left|\sum_{n=1}^N n \chi(n)\right|}{\left|\sum_{n=1}^N n \chi^{-1}(n)\right|}\right) = A
	\end{equation}
	is bounded from above. Thus, like Lemma~\ref{lm3} equation~\ref{lm4:eq:asymp3} holds almost surely if $\sigma = 0$ and {\em hence G.R.H. is almost surely true}. 
\end{proof}

\subsection{Dirichlet L-function with odd parity $\chi$ } 
For an odd parity the function $\chi$ satisfy
\begin{equation}
	\chi(-n) = -\chi(n)
	\label{sec5:eq:oddparity}
\end{equation}
and a completed Dirichlet L-function can be defined in an integral form as
\begin{equation}
	\phi_{\chi}^o(s) := \pi^{-\frac{s+1}{2}}\Gamma\left(\frac{s+1}{2}\right)L(s,\chi) = \int_{0}^{\infty} y^{\frac{s}{2}-1} \tilde{\Psi}_{\chi}(y) dy 
	\label{sec5:eq:oddintegral}
\end{equation}
where $\tilde{\Psi}_{\chi}$ is defined to account for the odd parity as
\begin{equation}
	\tilde{\Psi}_{\chi}(y) =\sum_{n=1}^{\infty}\chi(n) n \sqrt{y} \exp\left[-\pi n^2 y\right].
	\label{sec5:eq:oddPsi}
\end{equation}
The analytic continuation of equation~\ref{sec5:eq:oddintegral} follows a similar approach as the analytic continuation of the even parity case, where the completed Dirichlet L-function can be written as
\begin{equation}
	\phi_{\chi}^o(s) = \int_{\frac{1}{P}}^{\infty} y^{\frac{s}{2}-1} \tilde{\Psi}_{\chi}(y) dy + \frac{-jP^{1-s}}{G\left(\overline{\chi},P\right)} \int_{\frac{1}{P}}^{\infty} y^{\frac{1-s}{2}-1} \tilde{\Psi}_{\overline{\chi}}(y) dy
	\label{sec5:eq:oddanalytic}
\end{equation}
The analytic continuation in~\ref{sec5:eq:oddanalytic} is now valid over the entire s-domain. 

Again, for our analysis, we will restrict the domain to $\mathfrak{Re}(s) \in (0,1)$ and will use the following relation 
\begin{equation}
	\int_{0}^{\frac{1}{P}} y^{\frac{s}{2}-1} \tilde{\Psi}_{\chi}(y) dy = P^{-\frac{s}{2}}	\int_{0}^{1} y^{\frac{s}{2}-1} \tilde{\Psi}_{\chi}\left(\frac{y}{P}\right) dy.
	\label{sec5:eq:oddstep2}
\end{equation}

Working backwards, like in section~\ref{section3}, we could construct symmetric generator functions $z(s,\rho)$ and $z(1-s,\rho)$ according to 
\begin{eqnarray}
	\label{sec5:eq:oddalpha}
	z(s,\rho) &=& \frac{4(s+1)}{\sqrt{\pi}}\int_{0}^{1} y^{\frac{s}{2}-1} \tilde{\Psi}_{\chi}\left(\frac{y}{P}\right) dy \\  \label{sec5:eq:oddbeta}
	z(1-s,\rho) &=& \frac{4(2-s)}{\sqrt{\pi}}\int_{0}^{1} y^{\frac{1-s}{2}-1} \tilde{\Psi}_{\overline{\chi}}\left(\frac{y}{P}\right) dy	
\end{eqnarray}
which after using equation~\ref{sec5:eq:oddstep2} will lead to
\begin{eqnarray}
	\phi_{\chi}^o(s) &+& \frac{\sqrt{\pi}}{4} \left[P^{-\frac{s}{2}} \frac{z(s)}{s+1} -j \frac{P^{-\frac{1-s}{2}}}{G\left(\overline{\chi},P\right)}\frac{z(1-s)}{2-s} \right]  \\
	&=& \int_{0}^{\infty} y^{\frac{s}{2} - 1} \tilde{\Psi}_{\chi}(y) dy + \frac{P^{-\frac{1-s}{2}}}{G\left(\overline{\chi},P\right)}\int_{0}^{\infty} y^{\frac{1-s}{2} - 1} \tilde{\Psi}_{\overline{\chi}}(y) dy \\ \label{sec5:eq:oddanalytic1}
	&=& \phi_{\chi}^o(s) -j \frac{P^{1-s}}{G\left(\overline{\chi},P\right)} \phi_{\overline{\chi}}(1-s) \\
	&=& 2\phi_{\chi}^o(s)
\end{eqnarray}
Thus,
\begin{equation}
	\phi_{\chi}^o(s) = \frac{\sqrt{\pi}}{4} \left[P^{-\frac{s}{2}} \frac{z(s)}{1+s} -j \frac{P^{-\frac{1-s}{2}}}{G\left(\overline{\chi},P\right)}\frac{z(1-s)}{2-s} \right]; \quad \mathfrak{Re}(s) \in (0,1)
	\label{sec5:eq:oddanalyticfinal}
\end{equation}
Note that the factors $s+1$ and $2-s$ in equations~\ref{sec5:eq:oddanalytic1}-~\ref{sec5:eq:oddanalyticfinal} is different from the even parity case and accounts for the modified $\tilde{\Psi}_{\chi}(.)$ in equation~\ref{sec5:eq:oddPsi}. 
Like the section~\ref{section3}, the symmetric functions $z_N(s,\rho)$ and $z_N(1-s,\rho)$ can be generated according to
\begin{eqnarray}
	\label{sec5:eq:oddalphafn}
	&&	z_N(s,\rho) = \sum_{n=1}^{N} \chi(n) n \left(\mathcal{E}\left[\left|x_1 - x_2\right|;(\alpha_n,0,\rho)+(-\alpha_n,0,\rho)+(0,\alpha_n,\rho)+(0,-\alpha_n,\rho)\right] \right) \\ \nonumber
	&&	z_N(1-s,\rho) = \sum_{n=1}^{N}\overline{\chi}(n) n \left(\mathcal{E}\left[\left| \tilde{x}_1 - \tilde{x}_2 \right|;(\beta_n,0,\rho)+ (-\beta_n,0,\rho)+(0,\beta_n,\rho)+ (0,-\beta_n,\rho)\right] \right) \\ \nonumber
\end{eqnarray}
where the parameters $\{\alpha_n\}$ and $\{\beta_n\}$ are now different compared to the even parity case with respect to the parameters $\gamma_{\alpha},\omega_{\alpha},\gamma_{\beta},\omega_{\beta}$. These parameters for the odd-parity Dirichlet L-function are defined as 
\begin{eqnarray}
	\label{sec5:eq:oddalphagamma}
	\gamma_{\alpha} &=& \frac{\left(\frac{3}{2} - \sigma\right) }{\left(\frac{3}{2} - \sigma\right)^2 + \omega^2} \\ \label{sec5:eq:oddalphaomega}	
	\omega_{\alpha} &=&  \frac{\omega}{\left(\frac{3}{2} - \sigma\right)^2 + \omega^2} . 
\end{eqnarray}
and
\begin{eqnarray}
	\gamma_{\beta} &=& \frac{\left(\frac{3}{2} + \sigma\right) }{\left(\frac{3}{2} + \sigma\right)^2 + \omega^2} \\ \label{sec5:eq:oddalphaomegad}	
	\omega_{\beta} &=&  \frac{\omega}{\left(\frac{3}{2} + \sigma\right)^2 + \omega^2} 	
\end{eqnarray}

We can state the following Lemma.
\begin{lemma}
	\label{lm5}
	For an odd parity Dirichlet character $\chi$, all the non-trivial solutions satisfying $L(\frac{1}{2} + j \left(\omega + j \sigma\right),\chi) = 0$ also satisfies $\sigma = 0$ almost surely.
\end{lemma}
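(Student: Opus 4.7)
The plan is to mirror the argument of Lemmas~\ref{lm3} and~\ref{lm4} with the odd-parity bookkeeping dictated by equations~\ref{sec5:eq:oddanalyticfinal},~\ref{sec5:eq:oddalphafn}, and the shifted parameters in equations~\ref{sec5:eq:oddalphagamma}-~\ref{sec5:eq:oddalphaomegad}. First I would set $\phi_\chi^o\!\left(\tfrac{1}{2}+j(\omega+j\sigma)\right)=0$ in equation~\ref{sec5:eq:oddanalyticfinal}, take magnitudes, and invoke $|G(\overline{\chi},P)|=\sqrt{P}$ (valid because $\chi$ is primitive) to cancel the $P$-dependent prefactors. This yields a condition of the same shape as equation~\ref{lm4:eq:derv2} but with the factors $|(\tfrac{1}{2}\pm\sigma)+j\omega|$ replaced by $|(\tfrac{3}{2}\pm\sigma)+j\omega|$ coming from the prefactors $1/(1+s)$ and $1/(2-s)$. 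The right-hand side remains bounded for $\sigma\in(-\tfrac{1}{2},\tfrac{1}{2})$, so the asymptotic left-hand side must be bounded too.

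Next I would apply Lemma~\ref{lm1} to the odd-parity generators in equation~\ref{sec5:eq:oddalphafn} and repeat the $\Delta x=x_1-x_2$ rearrangement of equations~\ref{lm3:eq:derv3}-~\ref{lm3:eq:derv5}. The only structural change is the extra $n$ weight inside the character sum, so each $I_n(t)$ from equation~\ref{lm3:eq:derv4} picks up a factor of $n$. The Gaussian concentration argument that pins $\Delta x$ near $\mathfrak{Re}(\alpha_n(t))$ (and analogously $\mathfrak{Re}(\beta_n(t))$) is unchanged, but it is evaluated with the odd-parity definitions of $\gamma_\alpha,\omega_\alpha,\gamma_\beta,\omega_\beta$ from equations~\ref{sec5:eq:oddalphagamma}-~\ref{sec5:eq:oddalphaomegad}. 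Combining the two effects gives, almost surely,
\begin{equation}
\lim_{N\rightarrow\infty}\lim_{t\rightarrow\infty}\frac{\left|z_N(-\sigma+j\omega,t)\right|}{\left|z_N(\sigma-j\omega,t)\right|}
= \lim_{N\rightarrow\infty}\frac{\left|\sum_{n=1}^N n^2\chi(n)\right|}{\left|\sum_{n=1}^N n^2\overline{\chi}(n)\right|}\;\lim_{t\rightarrow\infty}e^{2[\gamma_\beta-\gamma_\alpha]t}\,\frac{\left|\cos\tfrac{1}{2}\omega_\alpha t\right|}{\left|\cos\tfrac{1}{2}\omega_\beta t\right|}.
\end{equation}

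Because $\chi$ and $\overline{\chi}$ are periodic modulo $P$ with $|\chi|=|\overline{\chi}|$, I would then argue that the $n^2$-weighted character-sum ratio is bounded above, as a direct extension of the $n$-weighted bound used at the end of Lemma~\ref{lm4}. Boundedness of the remaining temporal factor then forces $\gamma_\alpha=\gamma_\beta$ and $\omega_\alpha=\omega_\beta$; the equality $\omega_\alpha=\omega_\beta$ from equations~\ref{sec5:eq:oddalphaomega} and~\ref{sec5:eq:oddalphaomegad} reads $(\tfrac{3}{2}-\sigma)^2=(\tfrac{3}{2}+\sigma)^2$, which (for $\omega\neq 0$) forces $\sigma=0$ and concludes the argument.

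The main obstacle I foresee is justifying the boundedness of the $n^2$-weighted character-sum ratio: since $n^2$ grows without bound, the naive partial sums also grow without bound, and one must lean on the vanishing of $\sum_{n=1}^{P}\chi(n)$ for a non-trivial primitive character (together with an Abel-summation style cancellation argument) to show that the numerator and denominator grow at the same polynomial rate, so that the ratio stabilises to a finite constant. Once this refinement of the Lemma~\ref{lm4} character-sum bookkeeping is in place, the remainder of the proof is a direct transcription of the Lemma~\ref{lm3} concentration analysis with $\tfrac{1}{2}\pm\sigma$ replaced by $\tfrac{3}{2}\pm\sigma$.
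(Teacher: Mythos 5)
Your proposal follows the same route as the paper's proof of Lemma~\ref{lm5}: set the odd-parity completed L-function to zero in~\ref{sec5:eq:oddanalyticfinal}, cancel the $P$-factors via $|G(\overline{\chi},P)|=\sqrt{P}$, transcribe the concentration argument from Lemma~\ref{lm3} with the shifted parameters~\ref{sec5:eq:oddalphagamma}--\ref{sec5:eq:oddalphaomegad}, and combine the extra $n$ weight in~\ref{sec5:eq:oddalphafn} with the $n$-scaling of $\alpha_n$ to produce the $n^2$-weighted character-sum ratio in~\ref{sec5:eq:oddasymp1}. The conclusion $\gamma_\alpha=\gamma_\beta$, $\omega_\alpha=\omega_\beta\Rightarrow\sigma=0$ is likewise identical.

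The one place you deviate is in flagging boundedness of the $n^2$-weighted ratio as ``the main obstacle'' requiring Abel summation and the vanishing of $\sum_{n=1}^P\chi(n)$. No such machinery is needed, and the paper does not use it: since $\overline{\chi}(n)$ is literally the complex conjugate of $\chi(n)$, the denominator $\sum_{n=1}^N n^2\overline{\chi}(n)$ is the complex conjugate of the numerator $\sum_{n=1}^N n^2\chi(n)$, so the ratio of their magnitudes is identically $1$ for every $N$ (the growth rates of the two sums cancel tautologically, not by a cancellation estimate). The paper states only that the ratio is ``bounded from above,'' which suffices for the blow-up/oscillation argument to force $\gamma_\alpha=\gamma_\beta$ and $\omega_\alpha=\omega_\beta$; your proposed strengthening is unnecessary but harmless, and the rest of your argument matches the paper.
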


\begin{proof}
	The proof the Lemma follows similar definitions and procedure as in section~\ref{section3} and for the even parity case. Setting the LHS of equation~\ref{sec5:eq:oddanalyticfinal} to zero leads to	
	\begin{equation}
		\frac{\left| z\left(\left(\frac{1}{2}-\sigma\right) + j\omega\right)\right|}{\left|z\left(\left(\frac{1}{2}+\sigma\right) - j\omega \right)\right|} = \frac{\left|j P^{\frac{1}{2}}\right|}{\left|G\left(\overline{\chi},P\right)\right|} \frac{\left|\left(\frac{3}{2}-\sigma\right) + j\omega\right|}{\left|\left(\frac{3}{2}+\sigma\right) - j\omega\right|}
		\label{lm5:eq:derv1}
	\end{equation}
	Since $\chi$ is a primitive modulo $P$, the magnitude of the Gauss sum $\left|G\left(\overline{\chi},P\right)\right| = \sqrt{P}$. Hence, equation~\ref{lm5:eq:derv1} can be simplified to
	\begin{equation}
		\frac{\left| z\left(\left(\frac{1}{2}-\sigma\right) + j\omega\right)\right|}{\left|z\left(\left(\frac{1}{2}+\sigma\right) - j\omega \right)\right|} = \frac{\left|\left(\frac{3}{2}-\sigma\right) + j\omega\right|}{\left|\left(\frac{3}{2}+\sigma\right) - j\omega\right|}
		\label{lm5:eq:derv2}
	\end{equation}
	which is similar to the Lemma~\ref{lm4} and for the even parity case except for the symmetry about $\frac{3}{2}$ instead of $\frac{1}{2}$ on the RHS. However, the RHS is still bounded and for $\sigma = 0$ the RHS of~\ref{lm5:eq:derv2} still equals unity. 
	
	Following similar arguments as equations~\ref{lm3:eq:asymp1}-~\ref{lm3:eq:asympRHpre} and using concentration bounds leads to the statement that with a probability greater than $1 - \epsilon$, there exists an $n_0 > 0$ such that
	\begin{equation}
	 \lim_{N\rightarrow \infty}\left(\frac{\left|\sum_{n=1}^N n^2 \chi(n)\right|}{\left|\sum_{n=1}^N n^2 \overline{\chi}(n)\right|}\right) \lim_{t \rightarrow \infty} e^{2\left[\gamma_{\beta} - \gamma_{\alpha}\right]t} \frac{\left|\cos \frac{1}{2}\omega_{\alpha}t\right|}{\left|\cos \frac{1}{2}\omega_{\beta}t\right|} = \frac{\left(\frac{3}{2}-\sigma\right)^2 + \omega^2}{\left(\frac{3}{2}+\sigma\right)^2 + \omega^2}
		\label{sec5:eq:oddasymp1}
	\end{equation}
	holds. Since, both $\chi$ and $\overline{\chi}$ are periodic with respect to $P$ and
	$\left|\chi\right|$ and $\left|\overline{\chi}\right|$ the term
	\begin{equation}
		\lim_{N\rightarrow \infty}\left(\frac{\left|\sum_{n=1}^N n^2 \chi(n)\right|}{\left|\sum_{n=1}^N n^2 \overline{\chi}(n)\right|}\right) = A 
	\end{equation}
	is bounded from above implying that the Lemma holds almost surely, which is the statement for the G.R.H. for the Dirichlet L-function with odd parity.
\end{proof}

\section{Discussions and Conclusions}
\label{discussions}

In this paper we used an extension of the Price's theorem to construct a novel dynamical systems model whose steady-state evolves towards the completed Riemann zeta function and in a more general case towards a Dirichlet L-function. 
The model describes a dynamical system that combines both stochastic and deterministic processes where the two random sources become more correlated and asymptotically cancel each other out resulting in an evolution towards a deterministic steady-state function. In this paper we chose a specific form of the dynamical system that resulted in the completed Riemann zeta function and the Dirichlet L-function as the steady-state. However, choosing different forms of non-linear function $z_N$ and the variables $\alpha_n, \beta_n, n = 1,2,..$, should result in other dynamical systems formulation that evolve towards other functions. We believe that any Dirichlet series function that admits analytic continuation similar to the steps in~\ref{sec5:eq:analytic1} and~\ref{sec5:eq:analyticfinal}, can be analyzed using this model.

\section{Appendix I: Proof of Lemma 1}
\label{appendix1}

	The proof of Lemma~\ref{lm1} follows a similar procedure as the Price's theorem and its extensions~\citep{price,mcmahon,papoulis}, both of which use the Fourier transform $M(\omega_1,\omega_2)$ defined in equation~\ref{sec2:eq:ampfourier}. Following an interchange of variables leads to 
	\begin{eqnarray}
		& &\mathcal{E}\left[f;(m_1,m_2,\rho)\right] = \int_{-\infty}^{\infty} \int_{-\infty}^{\infty} f(x_1,x_2)p\left[x_1,x_2;(m_1,m_2,\rho)\right]dx_1 dx_2. \\ \label{lm1:eq:derv1}
		&=& \int_{-\infty}^{\infty} \int_{-\infty}^{\infty} f(x_1,x_2) \left[\frac{1}{\left(2\pi j\right)^2}
		\int_{-\infty}^{\infty} \int_{-\infty}^{\infty} M(\omega_1,\omega_2) \e^{- j \omega_1 x_1 - j \omega_2 x_2 } d\omega_1 d\omega_2 \right]dx_1 dx_2.
	\end{eqnarray}
	Using equations~\ref{lm1:eq:derv1} and~\ref{sec2:eq:ampfourier}, the following steps
	\begin{eqnarray}
		\nonumber
		& &\frac{\partial \mathcal{E}\left[f;(m_1,m_2,\rho)\right]}{\partial \rho} \\ \nonumber
		&=& \int_{-\infty}^{\infty} \int_{-\infty}^{\infty} f(x_1,x_2) \left[\frac{1}{\left(2\pi j\right)^2}
		\int_{-\infty}^{\infty} \int_{-\infty}^{\infty} \frac{\partial M(\omega_1,\omega_2)}{\partial \rho} \e^{- j \omega_1 x_1 - j \omega_2 x_2} d\omega_1 d\omega_2 \right]dx_1 dx_2 \\ \nonumber 
		&=& \int_{-\infty}^{\infty}\int_{-\infty}^{\infty} \frac{1}{\left(2\pi j\right)^2} M(\omega_1,\omega_2) \left(\omega_1\omega_2 + j\frac{d m_1}{d \rho} \omega_1+ j\frac{d m_2}{d \rho}\omega_2 \right) \\ \nonumber
		& & \quad \quad \quad \quad \left[\int_{-\infty}^{\infty}\int_{-\infty}^{\infty} f(x_1,x_2) 
		\e^{- j \omega_1 x_1 - j \omega_2 x_2} dx_1 dx_2 \right] d\omega_1 d\omega_2 \\ \nonumber
		&=& \int_{-\infty}^{\infty} \int_{-\infty}^{\infty} \left[\frac{\partial^2 f}{\partial x_1 \partial x_2} + \frac{d m_1}{d \rho}\frac{\partial f}{\partial x_1} + \frac{d m_2}{d \rho}\frac{\partial f}{\partial x_2} \right] \\ \nonumber
		& & \quad \quad \quad \quad \left[\frac{1}{\left(2\pi j\right)^2}
		\int_{-\infty}^{\infty} \int_{-\infty}^{\infty} M(\omega_1,\omega_2) \e^{- j \omega_1 x_1 - j \omega_2 x_2} d\omega_1 d\omega_2 \right] dx_1 dx_2 \\ \label{lm1:eq:derv2}
		&=& \int_{-\infty}^{\infty} \int_{-\infty}^{\infty} \left[\frac{\partial^2 f}{\partial x_1 \partial x_2} + \frac{d m_1}{d \rho}\frac{\partial f}{\partial x_1} + \frac{d m_2}{d \rho}\frac{\partial f}{\partial x_2} \right] p\left[x_1,x_2;(m_1,m_2,\rho)\right] dx_1 dx_2 		
	\end{eqnarray}
	proves the lemma~\ref{lm1}.

\vskip 0.2in


\begin{thebibliography}{2}


\bibitem[Riemann, 1859]{riemann} Riemann, B. Uber die Anzahl der Primzahlen unter einer gegebenen Grosse.  {\em Monatsberichte der Berliner Akademie In Gesammelte Werke,} Teubner, Leipzig, 1892.

\bibitem[Titchmarsh, 1986]{titchmarsh} Titchmarsh, E.C, The Theory of the Riemann Zeta Function.  {\em Health-Brown (ed.)} Oxford Univ. Press, 1986.

\bibitem[Price, 1958]{price} R. Price, A useful theorem for nonlinear devices having Gaussian inputs,{\em IRE Trans. on Information Theory, vol. IT-4}, pp. 69-72, June 1958.

\bibitem[McMahon, 1964]{mcmahon} E. L. McMahon., An extension of Price’s theorem, {\em IEEE Trans. on Information Theory}, vol. IT-IO, p. 168, April 1964.

\bibitem[Papoulis, 1965]{papoulis} A. Papoulis, Comment on ‘An extension of Price’s theorem,’ {\em IEEE Trans. on Information Theory}, vol. IT-11, p. 154, January 1965.

\bibitem[Dirichlet, 1969]{dirichlet} P. G. Lejeune Dirichlet, L. Kronecker, and L. Fuchs, G. Lejeune Dirichlet’s Werke. Bronx, N.Y: Chelsea Pub. Co, 1969.

\bibitem[Polymath, 2019]{polymath} D. H. J. Polymath, “Effective approximation of heat flow evolution of the Riemann Xi function, and a new upper bound for the de Bruijn–Newman constant,” Res Math Sci, vol. 6, no. 3, p. 31, Sep. 2019, doi: 10.1007/s40687-019-0193-1.

\bibitem[Newman, 1976]{newman}	C. M. Newman, “Fourier transforms with only real zeros,” Proc. Amer. Math. Soc., vol. 61, no. 2, pp. 245–251, 1976, doi: 10.1090/S0002-9939-1976-0434982-5.

\bibitem[Ki, 2009]{ki}	H. Ki, Y.-O. Kim, and J. Lee, “On the de Bruijn–Newman constant,” Advances in Mathematics, vol. 222, no. 1, pp. 281–306, Sep. 2009, doi: 10.1016/j.aim.2009.04.003.

\bibitem[Rogers, 2020]{rogers}	B. Rodgers and T. Tao, “THE DE BRUIJN–NEWMAN CONSTANT IS NON-NEGATIVE,” Forum of Mathematics, Pi, vol. 8, p. e6, 2020, doi: 10.1017/fmp.2020.6.

\bibitem[Platt, 2021]{platt}	D. Platt and T. Trudgian, “The Riemann hypothesis is true up to 3·1012,” Bull. London Math. Soc., vol. 53, no. 3, pp. 792–797, Jun. 2021, doi: 10.1112/blms.12460.

\bibitem[Bruijn, 1950]{bruijn}	N. G. de Bruijn, “The roots of trigonometric integrals,” Duke Math. J., vol. 17, no. 3, Sep. 1950, doi: 10.1215/S0012-7094-50-01720-0.

\bibitem[Newman, 2019]{newman19} C. M. Newman and W. Wu, “Constants of de Bruijn–Newman type in analytic number theory and statistical physics,” Bull. Amer. Math. Soc., vol. 57, no. 4, pp. 595–614, Apr. 2019, doi: 10.1090/bull/1668.

\bibitem[Bender, 2017]{bender}	C. M. Bender, D. C. Brody, and M. P. Müller, “Hamiltonian for the Zeros of the Riemann Zeta Function,” Phys. Rev. Lett., vol. 118, no. 13, p. 130201, Mar. 2017, doi: 10.1103/PhysRevLett.118.130201.

\bibitem[Berry, 1999]{berry} M. V. Berry and J. P. Keating, “The Riemann Zeros and Eigenvalue Asymptotics,” SIAM Rev., vol. 41, no. 2, pp. 236–266, Jan. 1999, doi: 10.1137/S0036144598347497.

\bibitem[Polya, 1926]{polya} G. Pólya, On the zeros of certain trigonometric integrals, J. London Math. Soc. 1 (1926), 98–99.



\end{thebibliography}
\end{document}